\documentclass[12pt]{amsart}
\usepackage[utf8]{inputenc}
\usepackage[foot]{amsaddr}
\usepackage{amsmath,amsfonts,amsthm,amssymb,verbatim,etoolbox,color}
\usepackage{enumitem} 
\usepackage{epigraph}
\usepackage{appendix}
\usepackage{xfrac}
\usepackage{mathtools}
\usepackage{flexisym}
\usepackage{bbm}

\usepackage[margin=1in]{geometry}
\usepackage[
    backend=biber,
    maxnames=4,
    maxalphanames=4,
    style=alphabetic,
    backref
  ]{biblatex}
\usepackage{scalerel}

\usepackage{hyperref}
\hypersetup{
    colorlinks=true,
    linkcolor=red,
    filecolor=magenta,      
    urlcolor=magenta,
    pdftitle={},
    pdfauthor = {Fred Tyrrell},
    pdfpagemode=FullScreen,
    linktocpage = true,
    citecolor = blue,
    }

\usepackage{theoremref}

\newtheorem{thm}{Theorem}
\newtheorem{lemma}[thm]{Lemma}

\newtheorem{corollary}[thm]{Corollary}
\newtheorem{conjecture}[thm]{Conjecture}

\theoremstyle{definition}

\newtheorem{remark}[thm]{Remark}

\numberwithin{thm}{section}

\newcommand{\F}{\mathbb{F}}

\newcommand{\sub}{\subseteq}

\newcommand{\bra}[1]{\left(#1\right)}

\newcommand{\ra}{\rightarrow}

\newcommand{\abs}[1]{\left\lvert {#1} \right \rvert}

\title{Beating product constructions for linear equations over finite fields}
\author{Paul Hametner}
\email{paul.hametner@ista.ac.at}
\address{Institute of Science and Technology, Austria}
\author{Fred Tyrrell}
\email{fred.tyrrell@bristol.ac.uk}
\address{Fry Building, School of Mathematics, University of Bristol}
\date{\today}

\urladdr{http://fredtyrrell.com}

\begin{document}
\begin{abstract}
We show that for any $A\subseteq \F_q^n$ lacking non-trivial solutions to a translation-invariant linear equation of genus one, meaning that no nonempty proper subset of the coefficients sums to $0$, there is a set $B\subseteq \F_q^m$ in some higher dimension which also lacks non-trivial solutions, such that
\[|B|^{1/m}>|A|^{1/n}.\]
In particular, this implies that no fixed cap set in $\F_3^n$ gives an asymptotically optimal lower bound by direct products alone.
\end{abstract}
\maketitle

\section{Introduction}
\subsection{Cap sets}
A cap set is a set $A \sub \F_3^n$ with no solutions to $x+y+z = 0$ with $x,y,z \in A$, other than the trivial solutions $x=y=z$. Since $x+y+z = 0 \iff x+z=2y$ in $\F_3^n$, this is equivalent to the property that $A$ contains no non-trivial arithmetic progressions. We write
\[r_3\bra{\F_3^n}:= \max \{\abs{A} : A \sub \F_3^n\text{ is a cap set}\},\]
and the cap set problem asks for the asymptotic behaviour of $r_3\bra{\F_3^n}$ as $n \ra \infty$.
\smallbreak
A breakthrough result of Ellenberg and Gijswijt \cite{EllenbergGijswijt}, based on the polynomial method of Croot, Lev and Pach \cite{CLP}, shows that $r_3\bra{\F_3^n} \leq \bra{2.7552\ldots}^n$, thus proving an exponential upper bound for the size of a maximal cap set.
\smallbreak
On the other hand, obtaining an exponential lower bound on $r_3\bra{\F_3^n}$ is rather more straightforward. If $A\subseteq \F_3^n$ and $B\subseteq \F_3^m$ are cap sets, then it is easy to show (see Proposition 2.1 in \cite{tyrrell2023new}) that $A\times B\subseteq \F_3^{n+m}$ is again a cap set. Thus by taking direct products of $A$ with itself, one can show (Proposition 2.2 in \cite{tyrrell2023new}) that any cap set $A\subseteq \F_3^n$ of size $c^n$ gives the asymptotic lower bound
\[r_3(\F_3^m)\geq \bra{c-o(1)}^m.\]
\smallbreak
The best known lower bounds for $r_3(\F_3^n)$ have ultimately come from finite-dimensional constructions, which are then turned into asymptotic lower bounds by product-type arguments. Edel \cite{Edel} introduced an extended product construction, building on work of Calderbank and Fishburn \cite{CalderbankFishburn}, which was later developed further by the second author \cite{tyrrell2023new}. AI-assisted searches have since led to further improvements, first through FunSearch \cite{romera2024mathematical} and more recently through $X$-evolve \cite{zhai2025x}, which find improved admissible sets to use in the framework of \cite{tyrrell2023new} and \cite{Edel}. The current record is $r_3\bra{\F_3^n} \geq \bra{2.2203\ldots}^n$.
\smallbreak
It is therefore natural to ask whether direct products of a given cap set could ever be asymptotically optimal. This was raised explicitly in the editorial introduction to \cite{tyrrell2023new}, and was asked by the second author during the problems session at the 30th British Combinatorial Conference (\cite{cameron2024problems}, Problem 21), and earlier by Sean Eberhard\footnote{Communicated to us by Thomas Bloom.}. The first result of this paper answers this question in the affirmative.

\begin{thm}\thlabel{capmain}
        Let $A \sub \F_3^n$ be a cap set. Then, for every integer $r \geq 1$, there is a cap set $B \sub \F_3^{2nr}$ of size \[\abs{B}=r\abs{A}^{2r-2}.\]
\end{thm}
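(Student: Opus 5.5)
The plan is to view $\F_3^{2nr}$ as $(\F_3^{n})^{2r}$, organised into $r$ \emph{blocks}, each a pair of coordinates in $\F_3^n$. We build $B$ as a disjoint union $B=B_1\cup\dots\cup B_r$ with $|B_i|=|A|^{2r-2}$. An element of $B_i$ has block $i$ equal to a fixed ``marker'' pair, and in each of the other $r-1$ blocks it takes values ranging over a set of size $|A|^2$ built from $A$. Disjointness then gives $|B|=r|A|^{2r-2}$ immediately.

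First, normalise $A$. Translates and the image $-A$ of a cap set are cap sets, so we may assume $0\notin A$, and we may use suitable translates $A+t$ where needed. We put the marker equal to $(0,0)$ in block $i$. For $m\neq i$, block $m$ of an element of $B_i$ ranges over $S$ if $m>i$ and over $T$ if $m<i$. Here $S,T\subseteq \F_3^n\times\F_3^n$ are products of translates of $\pm A$, each of size $|A|^2$, chosen with the properties listed below.

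We then check that $x+y+z=0$ with $x\in B_i$, $y\in B_j$, $z\in B_k$ forces $x=y=z$, case by case.

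\emph{Case $i=j=k$.} In each block $m\neq i$, all three points lie in the same product of (translated) cap sets, and a product of caps is a cap (Proposition 2.1 of \cite{tyrrell2023new}). So the only solutions are trivial.

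\emph{Case of exactly two equal indices, say $i=j\neq k$.} Look at block $i$. There $x$ and $y$ both equal $(0,0)$, so $z$'s block $i$ must be $(0,0)$. But $z$'s block $i$ lies in $S$ or $T$, and we arrange that neither contains $(0,0)$. This holds automatically if $0\notin A$ and every factor of $S$ and $T$ is a set not containing $0$ (for instance $A$ or $-A$).

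\emph{Case $i<j<k$.} Look at the middle block $j$. There $y$ is $(0,0)$, $x$'s block lies in $S$ (since $j>i$), and $z$'s block lies in $T$ (since $j<k$). So we need $S\cap(-T)=\emptyset$.

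The key step, and the main obstacle, is therefore to choose $S,T$ of size $|A|^2$ such that:
\begin{itemize}
\item $S$ and $T$ are products of caps;
\item $(0,0)\notin S\cup T$;
\item $S\cap(-T)=\emptyset$.
\end{itemize}
Using only one coordinate per block fails, because $A\cap(-A)$ can be nonempty. The second coordinate is what gives room. The first attempt is $S=A\times A'$ and $T=A''\times A'''$ with translates chosen so that one of the two coordinate factors is disjoint from minus its partner, i.e.\ $A\cap-(A+t)=\emptyset$ for some shift $t$. Such a $t$ should exist unless $A$ is almost all of $\F_3^n$, by averaging $|A\cap(-A-t)|$ over $t$. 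If a direct translate argument does not work, the fallback is to let the two coordinates of a block be correlated, e.g.\ $S=\{(a,a'+\phi(a))\}$ for a suitable map $\phi$, so that $s=-t$ is ruled out by a mismatch between coordinates while each coordinate still ranges over a cap. Once $S$ and $T$ are found, the three cases above complete the proof.
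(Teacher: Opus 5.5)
\textbf{There is a genuine gap: the key lemma is missing.} Your reduction is correct and is exactly the paper's. Your $B_i$ is the paper's $A_1^{i-1}\times\{0\}^{2n}\times A_2^{r-i}$ with $T=A_1$ and $S=A_2$, and your three cases are the paper's three cases. But the entire content of the theorem then sits in producing caps $S,T\subseteq\F_3^{2n}$ of size $\abs{A}^2$ with $(0,0)\notin S\cup T$ and $0\notin S+T$. You do not prove this, and both routes you sketch fail in general.

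For products of translates of $\pm A$, disjointness of $S$ and $-T$ needs some coordinate with $(\pm A+t)\cap-(\pm A+u)=\emptyset$, i.e.\ $-(t+u)\notin A\pm A$. Your averaging only yields a $t$ with $\abs{A\cap(-A-t)}\le \abs{A}^2/3^n$. This is useless once $\abs{A}\ge 3^{n/2}$, which is precisely the regime of interest, since the records have $c\approx 2.22>\sqrt3$. Concretely, $A=\{0,1\}^n$ is a cap set of density $(2/3)^n$ with $A+A=A-A=\F_3^n$. So no choice of signs and translates works, and the claim that a good shift exists unless $A$ is almost all of $\F_3^n$ is false. The fallback $S=\{(a,a'+\phi(a))\}$ hits the same wall if $T$ has the same shape. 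Since $A\pm A=\F_3^n$, the first coordinates of $S$ and $-T$ always meet, and the corresponding fibres are translates of $\pm A$, which again always meet.

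\textbf{The missing idea.} Get the disjointness from the cap property of $A$ itself, rather than from a gap in $A\pm A$. With $0\notin A$, take $T=A\times A$ and $S=\{(x+y,x-y):x,y\in A\}$. This $S$ is the image of $A\times A$ under the invertible linear map $(x,y)\mapsto(x+y,x-y)$, so it is a cap of size $\abs{A}^2$ avoiding $0$.
\begin{itemize}
\item Suppose $(a,b)+(x+y,x-y)=0$ with $a,b,x,y\in A$.
\item The first coordinate gives $a+x+y=0$, a cap equation inside $A$, so $a=x=y$.
\item The second coordinate then gives $b=0$, contradicting $0\notin A$.
\end{itemize}
Note that the first coordinate of $S$ ranges over $A+A$, which is typically not a cap. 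So your heuristic that each coordinate should range over a cap points away from the construction that works. What matters is that $S$ is a linear image of $A\times A$ whose first coordinate turns $s=-t$ into a three-term relation inside $A$, and whose second coordinate then collapses. With this lemma in place, your case analysis finishes the proof exactly as in the paper.
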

Taking $r > \abs{A}^2$ in \thref{capmain}, we have the following.
\begin{corollary}\thlabel{capcor}
    Let $A \sub \F_3^n$ be a cap set of size $c^n$. Then there exists a cap set $B \sub \F_3^m$ for some $m>n$, such that $\abs{B}>c^m$.
\end{corollary}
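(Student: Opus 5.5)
The corollary follows by substituting into \thref{capmain} with a well-chosen $r$. The whole argument is an elementary comparison of exponents.

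Let $A \sub \F_3^n$ be a cap set with $\abs{A} = c^n$, and assume $n \geq 1$ so that the dimension statement is meaningful. Fix any integer $r$ with $r > \abs{A}^2 = c^{2n}$. Such an $r$ exists since $\abs{A}$ is finite. Note $r \geq 2$, because $\abs{A} \geq 1$.

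Applying \thref{capmain} with this $r$ gives a cap set $B \sub \F_3^{m}$, where $m = 2nr$ and
\[\abs{B} = r\abs{A}^{2r-2} = r\, c^{n(2r-2)} = r\, c^{-2n}\, c^{2nr} = \frac{r}{\abs{A}^2}\, c^{m}.\]

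Two checks remain.
\begin{enumerate}
\item Since $r > \abs{A}^2$, the factor $r/\abs{A}^2$ exceeds $1$, so $\abs{B} > c^m$.
\item Since $r \geq 1$ and $n \geq 1$, we have $m = 2nr \geq 2n > n$.
\end{enumerate}

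There is no real obstacle here: all the difficulty lies in \thref{capmain}, which we are allowed to assume. The only points needing care are these.
\begin{enumerate}
\item Writing $\abs{A}^{2r-2}$ as $c^m / \abs{A}^2$ shows that the loss from the exponent $2r-2$ (rather than $2r$) is exactly a factor $\abs{A}^2$. The linear gain $r$ must then beat this fixed factor.
\item The inequality must be strict, which is why $r > \abs{A}^2$ and not $r \geq \abs{A}^2$.
\end{enumerate}
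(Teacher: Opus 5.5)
Your proposal is correct and is exactly the paper's argument: the paper obtains the corollary in one line by taking $r > \abs{A}^2$ in \thref{capmain}, and your identity $\abs{B} = r\abs{A}^{2r-2} = \frac{r}{\abs{A}^2}\,c^{m}$ with $m = 2nr > n$ is the computation left implicit there. You tacitly assume $A \neq \emptyset$ when dividing by $\abs{A}^2$; this is harmless, since if $c = 0$ a single point already gives a cap set of size $1 > 0 = c^m$.
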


\subsection{Genus one equations}
A similar result applies to more general linear equations over finite fields. Let $q$ be a prime power and denote by $\F_q$ the finite field with $q$ elements. Let $s\geq 3$ and consider the linear form
\[L\bra{x_1,\ldots,x_s}:=c_1x_1+\cdots+c_sx_s\]
over $\F_q$ with nonzero coefficients $c_i \in \F_q \setminus \{0\}$. We say that $L$ is translation-invariant if
\[c_1+\cdots+c_s=0.\]
Ruzsa \cite{ruzsa1993solving} introduced the concept of the genus of an equation over the integers, and the same language is used in the finite-field setting by Mimura and Tokushige \cite{mimura2021solving}. A translation-invariant $L$ has \emph{genus one} if
\[\sum_{i\in I}c_i\neq 0\]
for every nonempty proper subset $I\subsetneq [s]$. When $L$ is translation-invariant and of genus one, we say that a set $A\sub \F_q^n$ is \emph{$L$-free} if there are no solutions to $L\bra{x_1,\ldots,x_s}=0$ with $x_i\in A$, except for the trivial solutions where $x_1=\cdots=x_s$.

\begin{thm}\thlabel{genusmain}
Let $L\bra{x_1,\ldots,x_s}=0$ be a genus one translation-invariant equation over $\F_q$, where $s \geq 3$. Let $d=\bra{2^s-s-2}\bra{1+2s\bra{s-1}}$. If $A\sub \F_q^n$ is $L$-free, then, for every integer $r\geq 1$, there is an $L$-free set $B\sub \F_q^{dnr}$ of size
\[\abs{B}=r\abs{A}^{d\bra{r-1}}.\]
\end{thm}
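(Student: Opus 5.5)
The plan is to take $B$ to be a disjoint union of $r$ "layers" $B_1,\dots,B_r\sub \F_3^{2nr}$, each a product set of size $\abs{A}^{2r-2}$. Write a vector of $\F_3^{2nr}$ as $(y_1,\dots,y_r)$ with $y_j\in\F_3^{2n}=\F_3^n\times\F_3^n$. The layer $B_i$ consists of those vectors with $y_j\in A\times A$ for every $j\ne i$, and with $y_i=e$ for a fixed \emph{marker} vector $e\in \F_3^{2n}$, to be chosen. The layers are disjoint because they differ in which block carries the marker, provided $e\notin A\times A$. Hence $\abs{B}=r\abs{A}^{2r-2}$, and it remains to choose $e$ so that $B$ is a cap set.

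We check $x+y+z=0$ with $x,y,z\in B$ by splitting into cases according to the layers containing $x,y,z$.

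\emph{All three in one layer $B_i$.} Then the equation holds blockwise. By the product property (Proposition 2.1 of \cite{tyrrell2023new}), $(A\times A)^{r-1}$ is a cap set, so the solution is trivial. This is why the unmarked blocks must be honest products of $A$.

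\emph{Two in $B_i$, one in $B_j$ with $j\ne i$.} In block $i$ we get $2e+z_i=0$, i.e.\ $z_i=e$ with $z_i\in A\times A$. This is excluded by $e\notin A\times A$.

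\emph{All three in distinct layers.} In block $i$, the two unmarked elements must sum to $-e$. So we need $e\notin -(A\times A+A\times A)$, which may fail for large $A$. Therefore $e$ should not be a free vector. Instead we use the two halves asymmetrically and modify the unmarked blocks by a "twisting" map, replacing $A\times A$ by $\phi(A\times A)$ with $\phi$ a linear bijection (say $(a,b)\mapsto(a,b-a)$, or the addition of a suitable pattern). The image is still a cap set, has the same size, and sums of two of its elements lie in a proper subspace-type set. The marker $e$ is then chosen off that set and off $\phi(A\times A)$; a candidate is a vector of the form $(t,t)$ with $t\neq 0$ chosen against a cancellation in the twisted coordinates. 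I expect this case to be the main obstacle: one must find a twist that kills all mixed configurations without destroying the product structure. Why the twist needs only $2n$ rather than $n$ coordinates per block is the point to nail down first.

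For \thref{genusmain} the same scheme applies, with block size $dn$ in place of $2n$ (i.e.\ $d$ copies of $\F_q^n$ per block). Now a solution of $L(x_1,\dots,x_s)=0$ distributes the $s$ variables among the layers according to a partition of $[s]$. In each marked block the relation becomes $\bigl(\sum_{k\in I}c_k\bigr)e+\sum_{k\notin I}c_kx_k=0$ for the set $I$ of variables lying in that layer. Genus one guarantees that $\sum_{k\in I}c_k\ne0$ for every nonempty proper $I$, so the marker survives with a nonzero coefficient. We need a separate "coordinate gadget" for each of the $2^s-s-2$ nonempty proper subsets $I$ with $\abs{I}\ge2$. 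The singletons are automatically fine, and the full set gives $\sum_k c_k=0$; these are the cases handled by $e\notin A$-type conditions and by the product property, respectively. We also need $1+2s(s-1)$ copies per subset, to separate the ordered pairs of variables that might coincide. This accounts for $d=(2^s-s-2)(1+2s(s-1))$.

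The final step is to verify that every solution forced into a single layer is trivial, using the product property for $L$-free sets. Here translation invariance ensures that the marker block contributes $\bigl(\sum_k c_k\bigr)e=0$, so the remaining blocks give a solution in the product of copies of $A$, which is trivial.
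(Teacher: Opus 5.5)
Your proposal leaves the decisive case open (you flag it yourself), and the repair you sketch cannot work. In your layers every unmarked block carries the same set $S$, whether $A\times A$ or a twist $\phi(A\times A)$, regardless of where the marker sits. Take $s=3$ and $r\ge 3$. The blocks then decouple, and a non-trivial solution with $x,y,z$ in three distinct layers exists exactly when $-e\in S+S$. But $-e\notin S+S$ is equivalent to $S\cup\{e\}$ being a cap set, i.e.\ to $S$ being non-maximal, which is essentially what you are trying to prove. Concretely, suppose $A$ is maximal under inclusion, for instance any largest cap set, or $A=\{0,1\}\sub\F_3$. Then $A+A=\F_3^n$, so $\phi(A\times A)+\phi(A\times A)=\F_3^{2n}$ for every affine bijection $\phi$, and no marker exists. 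The same obstruction arises for general $L$ when all $s$ variables lie in different layers. So your claim that singleton layers are ``automatically fine'' is backwards: that is exactly the hard configuration.

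The missing idea is to make the unmarked blocks depend on their position relative to the marker. Take marker $0$ and $B_j=A_1^{j-1}\times\{0\}\times A_2^{r-j}$ with $A_2$ a linear image of $A_1$. If all $\rho(i)$ are distinct, let $\ell$ and $m$ be the variables in the smallest and second-smallest layers, and look at block $\rho(m)$. There the $m$-th variable is $0$, the $\ell$-th lies in $A_2$, and every other variable lies in $A_1$. Now project to the factor $T_{\ell,m}(A'\times A')$, where $T_{\ell,m}(x,y)=(x+\tfrac{c_m}{c_\ell}y,\,x-y)$. The first coordinate is a solution of $L$ in $A'$, hence diagonal, which forces $a=b$. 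The second coordinate is then a solution of $\sum_{i\ne \ell,m}c_ix_i=0$ in $A'$.

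That last step, and the case where some layer contains between $2$ and $s-1$ variables, need a set $A'$ with no solutions to $\sum_{i\in I}c_ix_i=0$ for every nonempty $I$ with $\abs{I}\le s-2$. Knowing $\sum_{k\in I}c_k\neq 0$ (your ``marker survives'') is not enough. You must also show $\sum_{i\in I}c_iA\neq\F_q^n$. If it were all of $\F_q^n$, you could set $x_j=b$ for one $j\notin I$ and $x_i=a\neq b$ for the remaining (at least one) indices outside $I$. Solving for the $x_i$ with $i\in I$ would then give a non-diagonal solution, contradicting $L$-freeness. Only after this can you translate so that $0\notin\sum_{i\in I}c_i(A+t_I)$, and take $A'$ to be the product of these $2^s-s-2$ translates.

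This is also where the two factors of $d$ come from. The factor $2^s-s-2$ counts the translates. The factor $1+2s(s-1)$ counts one copy of $A'$ (for the partial equations) plus a pair $A'\times A'$ for each ordered pair $(\ell,m)$. Your account of $d$ (``gadgets'' per subset, copies ``to separate ordered pairs that might coincide'') is fitted to the formula rather than derived from a working construction. Also, most of your write-up addresses the cap-set case in $\F_3^{2nr}$ rather than the statement at hand.
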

As with cap sets, direct products of $L$-free sets are $L$-free. Thus \thref{genusmain}, applied with any $r>\abs{A}^{d}$, gives a strict improvement over the product lower bound.
\begin{corollary}\thlabel{genuscor}
Let $L\bra{x_1,\ldots,x_s}=0$ be a genus one translation-invariant equation over $\F_q$ with $s \geq 3$, and let $A\sub \F_q^n$ be $L$-free of size $c^n$. Then there exists an $L$-free set $B\sub \F_q^m$ for some $m>n$ such that $\abs{B}>c^m$.
\end{corollary}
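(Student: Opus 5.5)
The plan is to apply \thref{genusmain} with a suitably large $r$ and compare the resulting size with $c^m$ directly. The only work is this comparison, together with checking that $m>n$. No step is a real obstacle; the points needing care are the choice of $r$ and the degenerate case $A=\emptyset$.

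First the degenerate case. If $A=\emptyset$ then $c=0$. Any single point $B=\{x\}\sub \F_q^{n+1}$ is $L$-free, because every solution with all $x_i\in B$ has $x_1=\cdots=x_s$. So $m=n+1$ works, since $\abs{B}=1>0=c^m$. From now on assume $\abs{A}\geq 1$, so $c\geq 1$ and $\abs{A}=c^n$.

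Next apply \thref{genusmain} with any integer $r>\abs{A}^{d}$, where $d=\bra{2^s-s-2}\bra{1+2s\bra{s-1}}$. This gives an $L$-free set $B\sub\F_q^{m}$ with $m=dnr$ and
\[\abs{B}=r\abs{A}^{d(r-1)}=r\,c^{nd(r-1)}=\frac{r}{c^{nd}}\,c^{dnr}=\frac{r}{\abs{A}^{d}}\,c^{m}.\]
Since $r>\abs{A}^d$, this gives $\abs{B}>c^m$.

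It remains to check $m>n$. For $s\geq 3$ we have $2^s-s-2\geq 3$ and $1+2s(s-1)\geq 13$, so $d\geq 39$. Also $r\geq 1$, so $m=dnr\geq 39n>n$ whenever $n\geq 1$. This completes the proof of \thref{genuscor}.
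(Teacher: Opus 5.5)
Your proof is correct and follows the paper's argument: apply \thref{genusmain} with $r>\abs{A}^d$, so that $\abs{B}=\bigl(r/\abs{A}^d\bigr)c^{dnr}>c^{m}$ with $m=dnr>n$. Your separate treatment of $A=\emptyset$ and your explicit check that $m>n$ are careful details the paper leaves implicit.
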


\begin{remark}
    The requirement that $L$ be genus one is completely natural in this context, as one cannot obtain asymptotic lower bounds for higher genus equations via direct products. For higher genus equations, the natural notion of a trivial solution is no longer simply that all variables are equal, and therefore direct products do not preserve $L$-freeness. 
    \smallbreak
    For example, consider the Sidon equation $L\bra{x_1,x_2,x_3,x_4}=x_1+x_2-x_3-x_4 = 0$, which is not genus one. One calls a solution trivial for this equation if $\{x_1,x_2\}=\{x_3,x_4\}$, and it follows that the direct product of Sidon sets is not necessarily Sidon.
\end{remark}
\begin{remark}
    A similar non-optimality result appears in work of Alon and Kleitman \cite{alon1990sum}, in the context of sum-free sets of integers. If $s(A)$ denotes the largest subset of $A$ which is sum-free, meaning no solutions to $x+y = z$, Corollary 2.3 of \cite{alon1990sum} says that there is always a $B$ such that $\frac{s(B)}{\abs{B}}<\frac{s(A)}{\abs{A}}$. Thus, there is no single set $A$ which is extremal for the ratio $\frac{s(A)}{\abs{A}}$.
\end{remark}

\subsection{Overview of the paper}

To prove \thref{capmain,genusmain}, we carefully construct a collection of direct products from linear transformations of $A$, and then show that the union of these direct products does not contain any non-trivial solutions. This is similar to the extended product construction introduced by Edel \cite{Edel} and developed by the second author \cite{tyrrell2023new}.
\smallbreak
The proof of \thref{capmain} is a shorter and more efficient version of the proof of \thref{genusmain}, but uses the same overall strategy. We therefore prove \thref{capmain} first in Section 2, and then \thref{genusmain} in Section 3.

\section*{Acknowledgements}
We thank Thomas Bloom for useful discussions and encouragement, particularly for suggesting that the original construction for cap sets in \thref{capmain} could be generalised to other translation-invariant equations. We also thank Christian Elsholtz for comments on an earlier version of this paper. The authors would like to thank the Erd\H{o}s Centre and the R\'{e}nyi Institute for accommodating them together for the duration of two summer schools and two workshops, which led to them collaborating on this paper. The second author is supported by the Heilbronn Doctoral Partnership.

\section{There is no asymptotically optimal cap set}
Throughout this section, we use the fact that cap sets are preserved under invertible affine transformations, and that direct products of cap sets are cap sets.
\smallbreak
To prove \thref{capmain}, we require the following lemma.
\begin{lemma}\thlabel{squaredisjoint}
    Let $A \sub \F_3^n$ be a cap set. Then there are two cap sets $A_1, A_2 \sub \F_3^{2n}$ such that
    \begin{enumerate}
        \item $\abs{A_1}=\abs{A_2}=\abs{A}^2$,
        \item $0 \notin A_1 + A_2$,
        \item $0 \notin A_1 \cup A_2$.
    \end{enumerate}
\end{lemma}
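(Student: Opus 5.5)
We will take $A_1 = A \times A$ and $A_2$ to be the image of $A\times A$ under a suitable invertible linear map of $\F_3^{2n}$, after first translating $A$. Both sets are then cap sets of size $\abs{A}^2$, because direct products of cap sets are cap sets and cap sets are preserved by invertible affine maps. The only real work is choosing the linear map so that any solution of $x+y=0$ with $x\in A_1$, $y\in A_2$ yields a three-term relation inside $A$. The cap property then forces that solution to be trivial, and the trivial case is excluded by the translation.

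\emph{Step 1 (normalise $A$).} Since $n\ge 1$, a cap set $A\sub\F_3^n$ is not all of $\F_3^n$: the whole space contains the non-trivial solution $x, x+d, x+2d$ for any $d\neq 0$. Hence we may translate $A$ and assume $0\notin A$. Translation preserves both the cap property and $\abs{A}$.

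\emph{Step 2 (the construction).} Set
\[A_1 = A\times A, \qquad A_2=\{(x+y,\; y-x) : x,y\in A\}.\]
The linear map $(x,y)\mapsto(x+y,y-x)$ has matrix $\begin{pmatrix}1&1\\-1&1\end{pmatrix}$ (acting blockwise), with determinant $2\neq 0$ in $\F_3$. So it is invertible, $A_2$ is a cap set, and $\abs{A_1}=\abs{A_2}=\abs{A}^2$. This gives (1).

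\emph{Step 3 (verify (2) and (3)).} For (3), note first that $0\notin A_1$ because $0\notin A$. Next, $(x+y,y-x)=0$ forces $y=-x$ and $y=x$, so $2x=0$, i.e.\ $x=0\notin A$; thus $0\notin A_2$. For (2), suppose $(a,b)+(x+y,y-x)=0$ with $a,b,x,y\in A$. The first coordinate gives $a+x+y=0$, a solution of the cap equation in $A$. Since $A$ is a cap set, this solution must be trivial, so $a=x=y$. The second coordinate then gives $b=x-y=0$, contradicting $0\notin A$. Hence $0\notin A_1+A_2$.

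The only step needing care is Step 3: one must choose the coupling so that the first block of a collision is exactly an instance of $x+y+z=0$ in $A$, and so that the resulting trivial case forces some coordinate to equal $0$. Step 1 is what makes that forced value impossible. A naive choice such as $-A_2=\{(x,-x-y)\}$ fails, because its trivial collisions $(a,a)$ do lie in $A\times A$. Using the second coordinate $y-x$, which vanishes on the diagonal, is what resolves this.
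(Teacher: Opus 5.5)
Your proposal is correct and is essentially the paper's proof. Your $A_2=\{(x+y,y-x)\}$ is the same set as the paper's $T(A\times A)=\{(x+y,x-y)\}$ after swapping $x$ and $y$, and the verification of (2) via the first-coordinate cap equation forcing $b=0$ is identical. Your extra remarks, that $A\neq\F_3^n$ so the translation exists and the direct check that $0\notin A_2$, are fine.
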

\begin{proof}
By translating $A$ if necessary, we may assume without loss of generality that $0 \notin A$. 
\smallbreak
Let $A_1 = A \times A$. Then $A_1 \sub \F_3^{2n}$ is a cap set of size $\abs{A}^2$.
\smallbreak
Let $T\bra{x,y}=\bra{x+y,x-y}$, which is an invertible linear map over $\F_3$. Then 
\[A_2 = T\bra{A_1} = \{\bra{x+y,x-y}:x,y \in A\}\] 
is a cap set with $\abs{A_2}=\abs{A_1}=\abs{A}^2$.
\smallbreak
Assume for a contradiction that $0\in A_1+A_2$. Then there exist $a,b,x,y\in A$ such that
\[(a,b)+(x+y,x-y)=(0,0).\]
The first coordinate gives $a+x+y=0$. Since $A$ is a cap set, this implies $a=x=y$. The second coordinate then gives $b+x-y=0$, and hence $b=0$, contradicting $0\notin A$. Therefore $0\notin A_1+A_2$.
\smallbreak
Finally, since $0 \notin A$, we certainly have $0 \notin A \times A = A_1$, and since $T$ is an invertible linear map, we also have $0\notin T\bra{A_1}$.
\end{proof}

We are now ready to prove \thref{capmain}.
\begin{proof}[Proof of \thref{capmain}]
Starting with a cap set $A \sub \F_3^n$, we apply \thref{squaredisjoint} to obtain cap sets $A_1,A_2 \subseteq \F_3^{2n}$ with
\[|A_1|=|A_2|=\abs{A}^2,\qquad 0 \notin A_1 \cup A_2,\qquad 0 \notin A_1+A_2.\]
Fix an integer $r \geq 1$, and define, for $1 \leq j \leq r$, the direct products
\[B_j = A_1^{j-1} \times \{0\}^{2n} \times A_2^{r-j}.\]
Since $A_1,A_2$ are cap sets, and trivially $\{0\}$ is a cap set, each $B_j$ is a cap set in $\F_3^{2nr}$ of size
\[\abs{A_1}^{j-1}\abs{A_2}^{r-j}=\abs{A}^{2r-2}.\]
Let
\[B = \bigcup_{j=1}^r B_j.\]
For $b \in B$ and $t \in [r]$ we write $b^{\bra{t}}\in\F_3^{2n}$ for the $t$-th block of $b$, so \[b^{\bra{t}}=\bra{b_{2n(t-1)+1},\ldots,b_{2nt}} \in \F_3^{2n}.\]
\smallbreak
If $u \in B_i$, $v \in B_j$, where $i \neq j$, then $u^{(i)} =0$, and $v^{(i)} \in A_1 \cup A_2$. Therefore, since $0 \notin A_1\cup A_2$, we have $B_i \cap B_j = \emptyset$ if $i \neq j$, and hence
\[\abs{B}=r \abs{A}^{2r-2}.\] 
\smallbreak
To show that $B$ is a cap set, assume we have distinct $x,y,z \in B$ such that $x+y+z=0$. Since each $B_i$ is a cap set, we cannot have $x,y,z \in B_i$. Thus, we either have (permuting $x,y,z$ if necessary) $x,y \in B_i$, $z \in B_j$ or $x \in B_i$, $y \in B_j$, $z \in B_k$, where $i,j,k \in [r]$ are distinct. 
\smallbreak
Since $x+y+z=0$, we have
\[x^{(t)}+y^{(t)}+z^{(t)}=0 \qquad \bra{1 \leq t \leq r}.\]
\smallbreak
In the first case, where $x,y \in B_i$, $z \in B_j$ for $i \neq j$, we have $x^{\bra{i}} = y^{\bra{i}}=0$. Depending on whether $i<j$ or $i > j$, we have $z^{\bra{i}} \in A_1$ or $z^{\bra{i}} \in A_2$ respectively. But $x^{\bra{i}} + y^{\bra{i}} + z^{\bra{i}} = 0$, and hence $z^{(i)}=0$, which is a contradiction, since $0 \notin A_1 \cup A_2$.
\smallbreak
In the second case, where $x,y,z$ are in $B_i,B_j,B_k$ respectively, assume without loss of generality that $i<j<k$. Then we have
\[    x^{(j)} \in A_2, \qquad y^{(j)} =0,\qquad    z^{(j)} \in A_1.\]
Thus $x^{(j)}+z^{(j)}=0$, but this is a contradiction, since $0 \notin A_1+A_2$.
\smallbreak
Thus there are no distinct $x,y,z \in B$ such that $x+y+z=0$, so $B$ is indeed a cap set, which proves the result.
\end{proof}
\begin{remark}
    In the language of \cite{tyrrell2023new}, this is the extended product construction applied to the extendable collection \(\{0\}^{2n},A_1,A_2\) from \thref{squaredisjoint} and the admissible set \( I(r,r-1)\) from Lemma 2.12 of \cite{tyrrell2023new}. See Section 2 of \cite{tyrrell2023new} for the details of the extendable-admissible construction.
\end{remark}

\section{Genus one equations}
We now prove \thref{genusmain}. Throughout this section, let
\[L\bra{x_1,\ldots,x_s}=c_1x_1+\cdots+c_sx_s\]
be a genus one translation-invariant linear form over $\F_q$, with $s \geq 3$. We shall use without further comment that, since \(L\) is translation-invariant, \(L\)-freeness is preserved under translations and invertible linear transformations. We shall also use that direct products of \(L\)-free sets are \(L\)-free, since $L$ has genus one.
\smallbreak
The proof of \thref{genusmain} has two ingredients. First, after replacing the original set by a direct product of translates of itself, we may assume that all partial equations involving at most $s-2$ variables have no solutions. Second, we use a two-coordinate construction to handle the remaining case, where all variables lie in different zero-block positions.

\begin{lemma}\thlabel{partialavoidance}
Let $A\sub \F_q^n$ be $L$-free, and let $k=2^s-s-2$. Then there is an $L$-free set $A'\sub \F_q^{kn}$ with $\abs{A'}=\abs{A}^k$ such that, for every nonempty $I\sub [s]$ with $\abs{I}\leq s-2$, there are no solutions to
\[\sum_{i\in I}c_ix_i=0,\qquad x_i\in A'.\]
In particular, $0\notin A'$.
\end{lemma}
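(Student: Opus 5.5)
The plan is to build $A'$ as a direct product of $k$ translates of $A$, with one coordinate block for each nonempty subset $I\sub[s]$ with $\abs{I}\leq s-2$. The number of such $I$ is $2^s-1-s-1=2^s-s-2=k$, which gives $A'\sub\F_q^{kn}$. For each such $I$ we choose a translate $A+t_I$ on which the partial equation $\sum_{i\in I}c_ix_i=0$ has no solutions, and set
\[A'=\prod_{I}\bra{A+t_I}.\]
Translates of $L$-free sets are $L$-free, and direct products of $L$-free sets are $L$-free because $L$ has genus one. Hence $A'$ is $L$-free with $\abs{A'}=\abs{A}^k$.

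Suppose $x_i\in A'$ for $i\in I$ satisfy $\sum_{i\in I}c_ix_i=0$. Projecting onto the block indexed by $I$ gives a solution of the same partial equation inside $A+t_I$, which contradicts the choice of $t_I$. So no such solution exists. Taking $I=\{1\}$ (allowed since $s-2\geq1$) gives $c_1x=0$, which forces $x=0$, so $0\notin A'$.

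The main step is showing that a suitable $t_I$ exists. Write $C_I=\sum_{i\in I}c_i$. Since $I$ is nonempty and proper, genus one gives $C_I\neq0$. For $a_i\in A$ we have $\sum_{i\in I}c_i(a_i+t)=\sum_{i\in I}c_ia_i+C_It$. Let
\[S_I=\Big\{\sum_{i\in I}c_ia_i : a_i\in A\Big\}.\]
It then suffices to find $u\notin S_I$ and take $t_I=-C_I^{-1}u$.

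\begin{itemize}
\item If $\abs{A}=1$, say $A=\{a\}$, then $S_I=\{C_Ia\}$ is a single point, so such a $u$ exists (as $q^n\geq 2$).
\item If $\abs{A}\geq2$, suppose for contradiction that $S_I=\F_q^n$. Let $J=[s]\setminus I$, so $\abs{J}\geq2$. Choose $x_j\in A$ for $j\in J$, not all equal. Since $S_I$ is everything, there are $x_i\in A$ for $i\in I$ with $\sum_{i\in I}c_ix_i=-\sum_{j\in J}c_jx_j$. This gives a solution of $L=0$ in $A$ that is non-trivial, because the $x_j$ are not all equal. That contradicts $L$-freeness, so $S_I\neq\F_q^n$.
\end{itemize}

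This surjectivity argument is where the hypothesis $\abs{I}\leq s-2$ is used, since it guarantees $\abs{J}\geq2$. With $t_I$ chosen this way for every $I$, the construction above proves the lemma.
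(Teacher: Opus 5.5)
Your proof is correct and follows the paper's argument essentially line for line. In both, $A'$ is the product of the translates $A+t_I$ over the $k$ admissible sets $I$; the sumset $\sum_{i\in I}c_iA$ cannot be all of $\F_q^n$ because $\abs{[s]\setminus I}\geq 2$ and $A$ is $L$-free; and $\sum_{i\in I}c_i\neq 0$ lets a translation move $0$ out of the partial sumset. Your treatment of $\abs{A}=1$ through the same translation step is, if anything, slightly more careful than the paper's one-line appeal to genus one, since it also covers $A=\{0\}$.
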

\begin{proof}
If $\abs{A} = 1$ the lemma directly follows from the genus one assumption, so we assume $\abs{A} \geq 2$. Fix a nonempty set $I\sub [s]$ with $\abs{I}\leq s-2$. We first show that $\sum_{i \in I}c_iA \neq \F_q^n$. 
\smallbreak
Choose distinct $a,b\in A$. Since $\abs{[s]\setminus I}\geq 2$, we may choose $j\in [s]\setminus I$ and set $x_{j}=b$ and $x_i=a$ for all $i\in [s]\setminus(I\cup\{j\})$. If $\sum_{i \in I}c_iA=\F_q^n$, then we can choose $x_i\in A$ for $i\in I$ such that
\[\sum_{i\in I}c_ix_i=-\sum_{i\in [s]\setminus I}c_ix_i.\]
This gives a non-diagonal solution to $L(x_1,\ldots,x_s)=0$ in $A$, contradicting the assumption that $A$ is $L$-free. Hence $\sum_{i \in I}c_iA\neq \F_q^n$.
\smallbreak
Since $\sum_{i \in I}c_i A$ is a proper subset of $\F_q^n$, we can choose $t_I\in \F_q^n$ such that
\[0\notin \sum_{i \in I}c_i\bra{A+t_I},\]
since $\sum_{i \in I}c_i \neq 0$ as $L$ has genus one. 
\smallbreak
Now let
\[A'=\prod_{\substack{I\sub [s]\\1\leq \abs{I}\leq s-2}}(A+t_I).\]
Then $A'$ is $L$-free, and $\abs{A'}=\abs{A}^k$, where $k=2^s-s-2$ is the number of nonempty subsets $I\sub [s]$ with $\abs{I}\leq s-2$. Finally, if there were a solution to $\sum_{i\in I}c_ix_i=0$ in $A'$ for some such $I$, then projecting to the coordinate indexed by $I$ would contradict the choice of $t_I$.
\end{proof}

The next lemma is the local two-coordinate construction which will be used when all variables lie in different zero-block positions.
\begin{lemma}\thlabel{twocoordinate}
Let $A\sub \F_q^n$ be $L$-free, and suppose that there are no solutions to
\[\sum_{i\in I}c_ix_i=0,\qquad x_i\in A,\]
for every nonempty $I\sub [s]$ with $\abs{I}\leq s-2$. Let $\ell,m\in [s]$ be distinct, and define
\[T_{\ell,m}\bra{x,y}=\left(x+\frac{c_{m}}{c_{\ell}}y,x-y\right).\]
Then $A\times A$ and $T_{\ell,m}(A\times A)$ are $L$-free sets of size $\abs{A}^2$, and there are no solutions to
\[c_{\ell}y+\sum_{i\in [s]\setminus\{\ell,m\}}c_ix_i=0\]
with $y\in T_{\ell,m}(A\times A)$ and $x_i\in A\times A$.
\end{lemma}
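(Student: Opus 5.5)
The plan is to mimic \thref{squaredisjoint}. The first two claims are routine. $A\times A$ is $L$-free because direct products of $L$-free sets are $L$-free. $T_{\ell,m}$ is an invertible linear map on $\F_q^{2n}$, so $T_{\ell,m}(A\times A)$ is also $L$-free of size $\abs{A}^2$. To see invertibility, solve $u=x+\frac{c_m}{c_\ell}y$, $v=x-y$: subtracting gives $u-v=\bra{1+\frac{c_m}{c_\ell}}y$, and $1+\frac{c_m}{c_\ell}=\frac{c_\ell+c_m}{c_\ell}\neq 0$. This holds because $\{\ell,m\}$ is a nonempty proper subset of $[s]$ (as $s\geq 3$) and $L$ has genus one.

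For the main claim, suppose for contradiction that $y=T_{\ell,m}(u,v)$ with $u,v\in A$, and that $x_i=(a_i,b_i)\in A\times A$ for $i\in[s]\setminus\{\ell,m\}$, satisfy the equation.

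\textbf{First coordinate.} It reads
\[c_\ell u+c_m v+\sum_{i\neq \ell,m}c_ia_i=0.\]
This is a full instance of $L$, with $x_\ell=u$, $x_m=v$ and $x_i=a_i$. Since $A$ is $L$-free, the solution is trivial, so $u=v=a_i$ for all $i\neq\ell,m$.

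\textbf{Second coordinate.} It reads
\[c_\ell(u-v)+\sum_{i\neq \ell,m}c_ib_i=0.\]
By the first step $u-v=0$, so this becomes $\sum_{i\in[s]\setminus\{\ell,m\}}c_ib_i=0$ with $b_i\in A$. The index set $I=[s]\setminus\{\ell,m\}$ is nonempty because $s\geq 3$, and has size $s-2$. This contradicts the hypothesis that no such partial equation has solutions in $A$.

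There is essentially no obstacle. The only points to check carefully are that the coefficient $\frac{c_m}{c_\ell}$ makes the first coordinate exactly $L$ evaluated with $x_m=v$, and the invertibility of $T_{\ell,m}$, both handled above.
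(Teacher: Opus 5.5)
Your proof is correct and follows the paper's argument essentially step for step. You get invertibility of $T_{\ell,m}$ from $c_\ell+c_m\neq 0$ (genus one), the first coordinate is a full instance of $L$ in $A$ and forces $u=v$, and the second coordinate then collapses to the excluded partial equation on $[s]\setminus\{\ell,m\}$.
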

\begin{proof}
By the genus one assumption, $c_m \neq -c_l$, so that $T_{\ell,m}$ is invertible, and the sets $A\times A$ and $T_{\ell,m}(A\times A)$ are $L$-free sets of size $\abs{A}^2$.
\smallbreak
Suppose for a contradiction that there are $y \in T_{\ell,m}\bra{A \times A}$ and $x_i \in A\times A$ for $i \in [s]\setminus\{\ell,m\}$ such that
\[c_{\ell}y+\sum_{i \in [s]\setminus\{\ell,m\}}c_ix_i =0.\] 
Then there are $a,b \in A$ such that
\[y=\left(a+\frac{c_{m}}{c_{\ell}}b,a-b\right),
\]
and for each $i\in [s]\setminus\{\ell,m\}$, we have
\[x_i=\bra{a_i,b_i},\qquad a_i,b_i\in A.\]
The first coordinate gives
\[c_{\ell}a+c_{m}b+\sum_{i\in [s]\setminus\{\ell,m\}}c_ia_i=0.\]
This is a solution to $L=0$ in $A$, so it is diagonal. In particular, $a=b$. The second coordinate then gives
\[\sum_{i\in [s]\setminus\{\ell,m\}}c_ib_i=0,\]
which contradicts the assumed absence of partial solutions, since $[s]\setminus\{\ell,m\}$ is nonempty and has size $s-2$.
\end{proof}

We can now prove \thref{genusmain}, by combining \thref{partialavoidance} and \thref{twocoordinate}, using a similar construction to the proof of \thref{capmain}.
\begin{proof}[Proof of \thref{genusmain}]
Let $A\sub \F_q^n$ be $L$-free. Apply \thref{partialavoidance} to obtain an $L$-free set $A'\sub \F_q^{kn}$ with $\abs{A'}=\abs{A}^k$ such that every nonempty partial equation of size at most $s-2$ has no solutions in $A'$. By the last assertion of \thref{partialavoidance}, we also have $0\notin A'$.
\smallbreak
Now define 
\[A_1=A'\times\prod_{\substack{(\ell,m)\in[s]^2\\\ell \neq m}}(A'\times A'),\qquad A_2=A'\times\prod_{\substack{(\ell,m)\in[s]^2\\\ell \neq m}}T_{\ell,m}(A'\times A').\]
Put
\[d=k\bra{1+2s(s-1)}.\]
Then
\[A_1,A_2\sub \F_q^{dn},\qquad \abs{A_1}=\abs{A_2}=\abs{A}^d.\]
Moreover, $A_1$ and $A_2$ are $L$-free, and $0\notin A_1\cup A_2$, since $0\notin A'$.
\smallbreak
For $1\leq j\leq r$, define
\[B_j=A_1^{j-1}\times\{0\}^{dn}\times A_2^{r-j}\sub \F_q^{dnr}, \qquad B=\bigcup_{j=1}^rB_j.\]
The sets $B_j$ are pairwise disjoint, since their zero blocks occur in different positions and $0\notin A_1\cup A_2$. Therefore
\[\abs{B}=r\abs{A_1}^{r-1}=r\abs{A}^{d\bra{r-1}}.\]
\smallbreak
It remains to show that $B$ is $L$-free. Assume there is a solution
\[\sum_{i=1}^s c_ib_i=0,\qquad b_i\in B.\]
For each $i\in [s]$, let $\rho(i)\in [r]$ such that $b_i\in B_{\rho(i)}$. For $b\in B$ write $b^{\bra{t}}$ for the $t$-th block of $b$. Then
\[\sum_{i=1}^s c_ib_i^{\bra{t}}=0\qquad \bra{1\leq t\leq r}.\]
We split into three cases.
\smallbreak

\begin{enumerate}[label=(\roman*)]
    \item First suppose that $\rho(1) = \rho(2) = \dots = \rho(s) = j \in [r]$. Then since each $B_j$ is $L$-free, we must have $b_1=\cdots=b_s$.
    \item Next suppose that there is some $j\in [r]$ such that
\[2\leq \abs{\{i:\rho(i)=j\}}\leq s-1.\]
Let
\[I=[s]\setminus\{i:\rho(i)=j\}.\]
Looking in the $j$-th block gives
\[\sum_{i\in I}c_ib_i^{\bra{j}}=0.\]
Projecting to the first $A'$-factor of $A_1$ and $A_2$ gives a solution to
\[\sum_{i\in I}c_ix_i=0,\qquad x_i\in A'.\]
Here $1\leq \abs{I}\leq s-2$, contradicting the partial-avoidance property of $A'$.

\item It remains to consider the case where all the values $\rho(i)$ are distinct. Let $\ell\in [s]$ be such that $\rho(\ell)$ is minimal, and let $m\in [s]$ be such that $\rho(m)$ is the second smallest value among the $\rho(i)$. Looking in the $\rho(m)$-th block, the $m$-th variable vanishes, the $\ell$-th variable lies in $A_2$, and every other variable lies in $A_1$. Projecting to the auxiliary factor indexed by $(\ell,m)$ gives a solution to
\[c_{\ell}y+\sum_{i\in [s]\setminus\{\ell,m\}}c_ix_i=0\]
with $y\in T_{\ell,m}(A'\times A')$ and $x_i\in A'\times A'$. This contradicts \thref{twocoordinate}.
\end{enumerate}
Thus every solution in $B$ is diagonal, so $B$ is $L$-free.
\end{proof}

\section{Discussion}
\begin{remark}
    The improvement in \thref{capcor} is not a quantitatively useful way to achieve better lower bounds for the cap set problem. \thref{capmain} gives an improvement to the constant $c$ in \thref{capcor} by a factor of
    \[\bra{\frac{r}{\abs{A}^2}}^{\frac{1}{2nr}} \leq \exp\bra{\frac{1}{2ne\abs{A}^2}}.\]
    Indeed, a calculation shows that \thref{capmain} can improve the current record for the lower bound of $c=2.2203\ldots$ from \cite{zhai2025x} in the 452nd decimal place. Reapplying the construction gives further improvements, which get super-exponentially smaller. Thus, the main result of \thref{capmain,capcor} is that \emph{some} improvement is always possible, and not the actual value of the improvement.
\end{remark}

\begin{remark}
    The dimension parameter $d$ in \thref{genusmain} can be reduced with a slightly more technical argument, particularly if there is symmetry among the coefficients of $L$. More precisely, if there are $t$ distinct coefficients appearing in \(L\), with multiplicities $m_1,\ldots,m_t$, let
\[d_0=\prod_{j=1}^t\bra{m_j+1}-2t-2, \qquad d_1=t\bra{t-1}+\#\{j\in[t]:m_j\geq 2\}.\]
Then \thref{genusmain} holds with $d=d_0+2d_1$, which is $2^s-2s-2 + 2s\bra{s-1}$ when the coefficients of $L$ are all distinct, but can in general be much smaller. 
\smallbreak
For example, with $L\bra{x,y,z}=x+z-2y$, the equation of a 3-term-progression, \thref{genusmain} gives $d=39$, but the above gives $d=6$ in odd characteristic greater than $3$, and $d=2$, as in \thref{capmain}, in characteristic $3$, when this equation is $x+y+z=0$.
\smallbreak
However, even after this optimisation, the improvement that \thref{genuscor} gives to the constant $c$ is extremely small. Using $d=6$ in \thref{genusmain} still does not give a meaningful improvement to the lower bound for 3-term-progression-free sets over $\F_p^n$ for $p>5$ due to Elsholtz, Hunter, Proske and Sauermann \cite{elsholtz2024improving}, and over $\F_5^n$ due to Pollak \cite{Pollak2025}. Therefore, we have chosen to present the cleanest proof of the qualitative result, \thref{genuscor}, and not optimise $d$ in \thref{genusmain}.
\end{remark}

\begin{remark}
It would be natural to consider whether \thref{genuscor} also holds for systems of equations. In this context, we call a system $L=\bra{L_1,\ldots,L_t}$ \emph{translation-invariant} if each $L_j$ is a translation-invariant equation in $s$ variables. 
\smallbreak
Writing
\[L_j\bra{x_1,\ldots,x_s}=c_{1,j}x_1 + \cdots + c_{s,j}x_s,\] the natural generalisation of genus one to a system of equations is that there is no nonempty proper subset $I \subsetneq [s]$ such that
\[\sum_{i \in I}c_{i,j} = 0, \qquad \bra{1 \leq j \leq t}.\]
\smallbreak
We say $A \sub \F_q^n$ is $L$-free if the only solutions to
\[L_j\bra{x_1,\ldots,x_s}=0, \qquad \bra{1 \leq j \leq t}\]
are the diagonal solutions with $x_1 = \cdots = x_s$. 
\smallbreak
We call a system \emph{non-degenerate} if it has a solution over $\F_q$ with not all variables equal. We need to exclude degenerate systems, since if $L$ is degenerate then $\F_q^n$ is $L$-free, so clearly a result like \thref{genuscor} cannot be obtained.
\smallbreak
With these definitions, if $L$ is a translation-invariant system of genus one, $L$-freeness is preserved under invertible affine transformations and direct products. In light of this, we expect that a result like \thref{genuscor} should hold for systems of equations.
\end{remark}

\begin{conjecture}
    Let $L$ be a non-degenerate genus one translation-invariant system of equations over $\F_q$, and let $A \sub \F_q^n$ be $L$-free of size $c^n$. Then there exists an $L$-free set $B \sub \F_q^m$ for some $m>n$ such that $\abs{B}>c^m$.
\end{conjecture}

\printbibliography
\end{document}